\newtheorem{thm}{Theorem}
\newcounter{tdfn}
\newcounter{trk}
\newenvironment{rk}
{\vspace{0.15cm}{\bf Remark \arabic{trk}.}} {\par
\addtocounter{trk}{1}}
{\endtrivlist}
\def\:{\colon}
\def\R{{\mathbb R}}
\def\0{{\mathbf 0}}
\def\1{{\mathbf 1}}
\def\R{{\mathbb R}}
\title{The Groups $G_{n}^{k}$ and Fundamental Groups of Configuration Spaces}
\author{V.O.Manturov \footnote{Bauman Moscow State Technical University and Chelyabinsk State University \newline Research is carried out with the support of Russian Science Foundation (project no. 16-11-10291).}}
\date{}
 \def\R{{\mathbb R}}
\begin{document}
% \magstep2

\maketitle

AMS MSC 57M25,57M27

{\Large

\begin{abstract}
We construct a map from fundamental groups of complements to some plane configurations to the groups $G_{n}^{k}$ for large $k$.
\end{abstract}

\section{Introduction. Basic Notions}
In the paper \cite{Great}, the author introduced a family of groups $G_{n}^{k}$ depending on two positive integers $n>k$,  and formulated the following principle:
{\em if a dynamical system describing a motion of  $n$ particles admits some nice general position codimension $1$ property governed exactly by  $k$ particles, then this dynamical system has invariants valued in $G_{n}^{k}$.}

In \cite{MN}, two partial cases were calculated explicitly. If we consider a motion of $n$ pairwise distinct points on the plane and choose the property  ``some three points are collinear'', then we get a homomorphism from the pure $n$-strand braid group $PB_{n}$ to the group $G_{n}^{3}$. If we choose the property ``some four points belong to the same circle or line'', we shall get a map from $PB_{n}$ to $G_{n}^{4}$. The approach with $G_{n}^{3}$ has been upgraded in \cite{Gn3toGn2}.

In the present paper we construct maps from fundamental groups of configuration spaces to the groups $G_{n}^{k}$.

We are interested in the configuration spaces $C_{n}(\R^{k-1})$ of ordered sets of  $n$ points in $(k-1)$-dimensional space $(n>k)$. For $k=3$ the fundamental group of this space $\pi_{1}(C_{n}(\R^{2}))$ is precisely the Artin pure braid group. For $k>3$ these spaces are simply connected.

Thus we shall consider spaces $C'_{n}(\R^{k-1})$ defined as follows: a point in $C'_{n}(\R^{k-1})$ is a set of $n$ distinct points in  $\R^{k-1}$, where every $(k-1)$ points are in general position. Thus, for example, for $k=3$, the only condition is that no two points among the given $n$ points coincide. In other words, $C'_{n}(\R^{2})=C_{n}(\R^{2})$. For $k=4$, for points $x_{1},\dots, x_{n}$ in three-space we require that no three points are collinear (though some four points can belong to the same plane), for $k=5$ a point in $C'_{n}(\R^{4})$ is an $n$-tuple of points in  $\R^{4}$ with no four of them belonging to the same $2$-plane, etc.
Quite analogously, one defines $C_{n}({\R}P^{k-1})$ and $C'_{n}({\R}P^{k-1})$.

Following  \cite{Great}, let us define the groups $G_{n}^{k}$. For positive integers $n>k$, the group $G_{n}^{k}$ has ${n \choose k}$ generators $a_{m}$, where $m$ runs all unordered sets of  $k$ distinct indices from $1$ to $n$.

$$G_{n}^{k}=\langle a_{m}|(1),(2),(3)\rangle,$$
where  (1) means that $(a_{m})^{2}=1$ for each unordered set \\ $m\subset \{1,\dots,n\}, Card(m)=k$; (2) means that $a_{m}a_{m'}=a_{m'}a_{m},$ whenever $Card(m\cap m')<k-1$; finally,
(3) looks as follows. For every set $U\subset \{1,\dots, n\}$ of cardinality $k+1$, let us order arbitrarily all its $k$-subsets: $m^{1},\dots, m^{k+1}$. Then
$(a_{m^1}\cdots a_{m^{k+1}})^{2}=1$.
This relation can also be rewritten as $a_{m^{1}}\cdots a_{m^{k+1}}=a_{m^{k+1}}\cdots a_{m^{1}}$.

Note that for  $n=k+1$ the relations of type (2) never happen, since every two distinct subsets $m,m'$ of cardinality $k$ of $\{1,\dots, k+1\}$ have intersection of cardinality  $k-1$.

Let us fix  two positive integers $n>k$. Let us consider the space $C'_{n}(\R^{k-1})$. A point $x\in C'_{n}(\R^{k-1})$ is {\em singular}, if the set of points $x=(x_{1},\dots, x_{n})$ representing it contains some  $k$ points which are not in general position, i.e., belong to the same  $(k-2)$-plane. Let us fix two nonsingular points  $x,x'\in C'_{n}(\R^{k-1})$.

Let us consider the set of smooth paths $\gamma_{x,x'}: [0,1]\to C'_{n}(\R^{k-1})$. For every such path we have a set of values $t$ for which the set of points  corresponding to $\gamma_{x,x'}(t)$ is not in general position (some $k$ points belong to the same $k-2$-plane). We call these values $t\in [0,1]$ {\em singular}. We say that a smooth path is {\em good and stable} if the following conditions hold:

\begin{enumerate}

\item The set of singular values $t$ is finite;

\item For each singular value $t=t_{l}$, among the $n$ points representing $\gamma_{x,x'}(t_{l})$ there exists exactly one subset of  $k$ points belonging to the same $(k-2)$-plane;

\item A smooth path is {\em stable}, i.e., the number of singular values does not change under a small perturbation.

\end{enumerate}

We say that two paths $\gamma,\gamma'$ with the same endpoints $x,x'$ are  {\em isotopic}, if there exists a continuous family $\gamma^{s}_{x,x'},s \in [0,1]$, of smooth paths with endpoints fixed, such that $\gamma^{0}_{x,x'}=\gamma,\gamma^{1}_{x,x'}=\gamma'$.
A smooth path is a  {\em braid}, if $x$ and $x'$ correspond to the same set of points (possibly, up to a permutation). A smooth path is a  {\em pure braid} if  $x$ and $x'$ coincide as ordered sets.
We can make every smooth path good and stable with the the ends fixed by an arbitrarily small perturbation.
A braid can also be {\em good and stable}; if it is not so, we can make it good and stable by an arbitrarily small perturbation.

We shall consider the set of smooth paths up to isotopy. For paths $\gamma_{x,x'}$ and $\gamma'_{x',x''}$ the {\em concatenation} operation is well defined; one should consider the path $\gamma''_{x,x''}$, such that $\gamma''(t)=\gamma(2t)$ for $t\in [0,\frac{1}{2}]$ and $\gamma''(t)=\gamma'(2t-1)$ for $t\in [\frac{1}{2},1]$ and smooth it in the neighbourhood of $t=\frac{1}{2}$; this smoothing is unique up to isotopy.

Hence, the set of braids and the set of pure braids (for a fixed $x$) admits a group structure. The latter group is, obviously, isomorphic to the fundamental group of the configuration space $\pi_{1}(C'_{n}(\R^{k-1}))$. The former group is isomorphic to the fundamental group of the quotient space of this configuration space by the obvious action of the permutation group.

\section{The Main Theorem}

Now with each good and stable path we associate an element of the group $G_{n}^{k}$ as follows. Let us enumerate all singular values $0<t_{1}<\dots <t_{l}<1$ of our smooth path (it is agreed that $0$ and $1$ are non-singular). For each singular value $t_{p}$ there exists exactly one unordered set $m_{p}$ of $k$ indices corresponding to this value. Let us associate  $a_{m_{p}}$ to it. With the whole path $\gamma$ we associate the product $f(\gamma)=a_{m_{1}}\cdots a_{m_{l}}$.

\begin{thm}
The map  $f$ described above takes isotopic paths $\gamma_{x,x'},\gamma'_{x,x'}$ with the endpoints fixed ($x,x'$ are
non-singular) to the same element of the group $G_{n}^{k}$.
For pure braids, the map  $f$ is a homomorphism \\ $f:\pi_{1}(C'_{n}(\R^{k-1}))\to G_{n}^{k}$.

\label{th1}
\end{thm}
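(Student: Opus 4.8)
The plan is to show that $f$ is well-defined on isotopy classes by a standard "general position in a one-parameter family" argument. Given an isotopy $\gamma^{s}$, $s\in[0,1]$, between two good and stable paths, I would first perturb the whole family (rel endpoints) so that it is generic as a map from the square $[0,1]_{t}\times[0,1]_{s}$ into $C'_{n}(\R^{k-1})$. The locus $\Sigma$ of parameters $(t,s)$ for which the configuration $\gamma^{s}(t)$ has some $k$ points on a common $(k-2)$-plane is then a curve in the square with only finitely many non-generic points. The non-generic points are of exactly three types, corresponding precisely to the three families of relations in $G_{n}^{k}$: (i) a tangency of $\Sigma$ to a horizontal line $s=\text{const}$, where two singular values $t_{p}<t_{p+1}$ carrying the \emph{same} label $m$ come together and annihilate — this is relation (1), $a_{m}^{2}=1$; (ii) a transverse double point of $\Sigma$, where two singular values carrying labels $m,m'$ with $|m\cap m'|<k-1$ exchange order — this is relation (2), $a_{m}a_{m'}=a_{m'}a_{m}$, using the fact that when $|m\cap m'|<k-1$ the two degeneracies involve disjoint enough point-sets that they can be made independent; (iii) a point where $\Sigma$ has a more degenerate local model because $k+1$ of the $n$ points momentarily lie on a common $(k-2)$-plane, so that near this value the singular set records all $k+1$ of the $k$-subsets of that $(k+1)$-set — this is exactly relation (3).

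The core of the argument is therefore a local analysis near each of the three kinds of special points of $\Sigma$. For (i) and (ii) this is the usual "birth/death" and "crossing" picture and I would spell out that crossing the special parameter value changes the word $f(\gamma^{s})$ by inserting $a_{m}a_{m}$ (resp. by swapping adjacent commuting letters), hence does not change the image in $G_{n}^{k}$. For (iii), the key geometric input is that when exactly $k+1$ points $\{p_{i}:i\in U\}$, $|U|=k+1$, approach a common $(k-2)$-plane in a generic one-parameter family, the discriminant hypersurface in the space of such configurations looks locally like a central hyperplane arrangement whose chambers are cyclically arranged, and a small loop (or an arc of the isotopy passing near this stratum) reads off the $k$-subsets $m^{1},\dots,m^{k+1}$ of $U$ in an order that is a cyclic rotation or reflection of a fixed order. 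Thus the two sides of the square-of-the-product relation $(a_{m^{1}}\cdots a_{m^{k+1}})^{2}=1$ are exactly the two readings, and the contribution is trivial in $G_{n}^{k}$. I expect this codimension-one stratum analysis — identifying the local model of the discriminant for $k+1$ points on a $(k-2)$-plane and checking that the cyclic order of the $k$-subsets is the one forced by relation (3), including getting the orientations/signs right — to be the main obstacle; everything else is bookkeeping.

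Once well-definedness is established, the homomorphism statement is almost immediate: for pure braids the concatenation $\gamma\cdot\gamma'$ has singular values that are the singular values of $\gamma$ followed by those of $\gamma'$ (after the reparametrization used in the definition of concatenation, and after a further small perturbation to restore goodness and stability, which does not change the isotopy class), so the associated word is the concatenation of the two words, i.e. $f(\gamma\cdot\gamma')=f(\gamma)f(\gamma')$ in $G_{n}^{k}$. The identity path has no singular values and maps to $1$, and the reverse path $\bar\gamma$ has the reversed sequence of labels, giving $f(\bar\gamma)=a_{m_{l}}\cdots a_{m_{1}}=f(\gamma)^{-1}$ by relation (1); combined with well-definedness on $\pi_{1}(C'_{n}(\R^{k-1}))$ this shows $f$ is a group homomorphism. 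I would also remark that exactly the same argument, applied to paths between points differing by a permutation, gives the braid-group statement mentioned before the theorem, with $G_{n}^{k}$ replaced by its natural extension by $S_{n}$.
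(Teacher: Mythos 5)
Your proposal follows essentially the same route as the paper: the paper's own (very brief) proof also reduces well-definedness to a codimension-two analysis of a generic one-parameter family, matching birth/death of a singular $k$-tuple to relation (1), simultaneous independent $k$-tuples to relation (2), and $k+1$ points on a common $(k-2)$-plane to relation (3), citing \cite{Great} for the general-position claim that the two opposite perturbations of the type-(iii) stratum yield the word and its reverse. Your write-up simply fills in more of the local analysis (and the easy concatenation/inverse bookkeeping) than the paper records.
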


\begin{proof}
The proof follows from the basic principle from \cite{Great} that in order to consider isotopy between two paths, it suffices to take into account only singularities of codimension at most two (and not to count higher codimension singularities). As singularities of codimension one ($k$-tuples of points on the same $(k-2)$-plane) give rise to generators, relators correspond to singularities of codimension two. Let us list them explicitly.

\begin{enumerate}
\item A non-stable set of $k$ points on a $(k-2)$-plane which disappears after a small perturbation; this corresponds to the relation ${a_{m}}^{2}=1$;

\item Coincidence of two moments when two independent $k$-tuples appear. This corresponds to the relations
$a_{p}a_{q}=a_{q}a_{p}$, where the sets $p,q$ have no more than  $k-2$ common indices;

\item The situation when some $k+1$ points belong to the same $(k-2)$-plane. This path is not good.

     We can make it good by a small perturbation; the resulting good path will contain some $k+1$ moments when each  $k+1$ subsets of $k$ points belong to the same plane \cite{Great}. The ``opposite'' small perturbation leads to the opposite order of these $k+1$ non-stable subsets, which follows from the general position argument from \cite{Great}. This leads to the relation (3).
\end{enumerate}

\end{proof}

\begin{rk}
In the case $k=3$, the map described above coincides with the map from the pure braid group to  $G_{n}^{3}$,  see \cite{MN}.
\end{rk}

The same arguments as above lead to the following
\begin{thm}
Theorem \ref{th1} holds if we replace $\R^{k-1}$ with ${\R}P^{k-1}$.\label{th2}
\end{thm}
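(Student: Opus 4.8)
The plan is to reduce Theorem~\ref{th2} to Theorem~\ref{th1} by checking that every ingredient of the proof of Theorem~\ref{th1} survives the passage from $\R^{k-1}$ to ${\R}P^{k-1}$. The only facts about the ambient space that were used are: (a) the notion of ``$k$ points lying on a common $(k-2)$-plane'' makes sense and defines a codimension-one stratum in the configuration space; (b) the codimension-two strata are exactly the three types listed (a non-stable degenerate $k$-tuple, two independent degenerate $k$-tuples occurring simultaneously, and a degenerate $(k+1)$-tuple); and (c) the local combinatorial model near each codimension-two stratum is the same, in particular that pushing a $(k+1)$-point degeneracy across itself in the two opposite directions produces the $k+1$ elementary degeneracies in opposite cyclic orders. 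All three are projective-invariant statements, so the proof should go through essentially verbatim.

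First I would set up the projective analogue of the singular set: a point of $C'_{n}({\R}P^{k-1})$ is singular if some $k$ of the $n$ points lie on a common projective $(k-2)$-subspace. Since any $k-1$ points in ${\R}P^{k-1}$ in general position span a unique $(k-2)$-subspace, the condition ``the $k$-th point lies on that subspace'' is one algebraic equation, cutting out a codimension-one subvariety just as in the affine case; so good and stable paths and the generator-assignment $a_{m_{p}}$ are defined exactly as before, and $f$ is defined on projective good-and-stable paths. Next I would record that the good-and-stable perturbation lemmas (every smooth path and every braid can be made good and stable by an arbitrarily small perturbation) hold in ${\R}P^{k-1}$ as well, because they are local transversality statements and ${\R}P^{k-1}$ is a smooth manifold; nothing about the global topology of the space is invoked.

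The heart of the matter is the codimension-two classification. Here I would appeal to the basic principle of \cite{Great}: to decide whether two paths are isotopic it suffices to track only codimension $\le 2$ singularities, and the codimension-two local pictures are governed entirely by the combinatorics of which $k$-subsets of a given $(k+1)$-subset become degenerate. This combinatorics is projectively invariant---``$k+1$ points on a $(k-2)$-plane'', the three listed local types, and the opposite-order phenomenon for the $(k+1)$-point singularity are all notions that only refer to incidences of points and subspaces, hence are preserved by (and indeed can be checked in any) affine chart containing the relevant configuration. Therefore the relators that arise are precisely relations (1), (2), (3) of $G_{n}^{k}$, the same list as in the proof of Theorem~\ref{th1}, and the well-definedness of $f$ on isotopy classes follows; the multiplicativity of $f$ under concatenation is formal and unchanged, giving the homomorphism $f\colon\pi_{1}(C'_{n}({\R}P^{k-1}))\to G_{n}^{k}$ for pure braids.

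The main obstacle I anticipate is purely bookkeeping rather than conceptual: one must make sure that near a codimension-two singular configuration one can pass to an affine chart in which \emph{all} the points involved in that singularity (and in a neighborhood of the relevant moment of time) stay in the finite part, so that the affine analysis of \cite{Great} applies literally; since only finitely many points are involved and they vary continuously, a generic choice of the hyperplane at infinity does this, but this is the point that deserves an explicit sentence. A secondary subtlety is that ${\R}P^{k-1}$ is non-orientable for $k$ even, so one should note that the ``opposite perturbation reverses the order'' argument is a local statement in a coordinate chart and does not depend on a global orientation. With these two remarks in place the proof is, as the author says, ``the same arguments as above,'' and I would simply write: the proof is obtained from that of Theorem~\ref{th1} by replacing every occurrence of ``$(k-2)$-plane in $\R^{k-1}$'' with ``$(k-2)$-subspace in ${\R}P^{k-1}$'' and working in a generic affine chart near each singular moment.
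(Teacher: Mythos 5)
Your proposal is correct and follows exactly the route the paper takes: the paper gives no separate argument for Theorem~\ref{th2} beyond the remark that ``the same arguments as above'' apply, and your write-up is precisely that reduction, with the useful extra observations (generic affine chart near each singular moment, irrelevance of non-orientability) made explicit.
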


\section{Hierarchy, Duality, and Open Questions}

The groups $G_{n}^{k}$ have a hierarchy $\cdots \subset G_{n}^{k}\subset G_{n+1}^{k+1}\subset G_{n+2}^{k+2}\subset\cdots$, where the monomorphism $G_{n}^{k}\to G_{n+1}^{k+1}$ is given by the formula $a_{m}\mapsto a_{m\cup \{n+1\}}$; the inverse map (projection) is given by the formula $a_{m}\mapsto 1,\mbox{ if } (n+1) \notin m, a_{m}\mapsto a_{m\backslash \{n+1\}}, \mbox{ if } (n+1)\in m$.

This hierarchy agrees with embeddings of (projective) spaces and the maps of fundamental groups to $G_{n}^{k}$ described above, as follows.

With each dynamical system of  $n$ pairwise distinct points in ${\R}P^{k-1}$
we can associate a dynamical system of  $n+1$ points in ${\R}P^{k}$ with homogeneous coordinates $(y_{1}:y_{2}:\dots: y_{k}:y_{k+1})$, such that the \\ $(n+1)$-th point is fixed with coordinates $(0:0:\dots 0:1)$, and $j$-th point $j=1,\cdots, n$, has the same coordinates as those of the initial dynamics, and the last coordinate is equal to some constant $s_{j}$. Whatever  coordinate $s_{j}$ we choose, any singular value of the initial dynamical system leads to  a singular value for the resulting dynamical system: $k$ points on a $(k-2)$-dimensional plane together with the $(n+1)$-th point will lie on a $(k-1)$-plane.

Our goal is to choose all $s_{j}$ in such a way that no other singular values occur for the new dynamics.  This can be done by consequently choosing all $s_{j}$ positive and large enough in comparison with all previous $s_{1},\cdots, s_{j-1}$. Namely, the condition that some $k+1$ points in ${\R}P^{k}$ lie on the same $(k-1)$-plane means that some determinant equals zero. The determinant in question will be equal to the sum $D=\pm s_{m_1}D_{1}\pm  s_{m_2}D_{2}\pm \cdots \pm s_{m}D_{m}$ where $D_{j}$ are some minors, which are all non-zero because no $(k-1)$ points belong to the same $(k-3)$-plane. By using the compactness argument, all $D_{i}$ attain their minimum and maximum absolute values, so we can choose all $s_{j}$ such that the sign of $D$ is the same as that of the ``leading term'' $\pm s_{m} D_{m}$ if $s_{m}$ is large enough. Now, the compactness argument can be used not only for a single braid but also for an isotopy between two braids.
In this case, the word corresponding to the new dynamics will be obtained from the word corresponding to the initial dynamics by replacing each $a_{m},$ ($m\subset \{1,\cdots, n\}$) with $a_{m\cup \{(n+1)\}}$, hence the resulting word will be the image of the inclusion map $G_{n}^{k}\to G_{n+1}^{k+1}$.

Studying the image of the group $\pi_{1}(C'_{n}(\R^{k-1}))$ in the group $G_{n}^{k}$ is quite an interesting problem. This map is neither injective nor surjective (we can see that it is non-injective even for $k=3$: the pure braid corresponding to the cyclic rotation of the circle containing all $n$ points, is mapped to the unit element of the group $G_{n}^{k}$ because no three points belonging to the same circle are collinear). This map can be made injective if we consider  a bit more complicated group than  $G_{n}^{3}$, see \cite{Imaginaire}.

As for the image $Im f$, the following holds.
Let us fix some \\ $k-1$ indices $m_{1},\dots, m_{k-1}\in \{1,\dots,n\}$, and order the remaining indices: $l_{1},\dots, l_{n-k+1}$. We say that an element of $G_{n}^{k}$ is {\em elementary} (or $\epsilon$-type) element, if it is equal to
$(a_{m_{1},\dots, m_{k-1}, l_{1}}\cdots, a_{m_{1},\dots m_{k-1}, l_{n-k+1}})^{2}$.

\begin{thm}
For $k>3$ every element from $Im f\in G_{n}^{k}$ is a product of $\epsilon$-type elements and their conjugates.
\end{thm}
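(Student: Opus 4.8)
The plan is to exhibit explicit generators of $\pi_1(C'_n(\R^{k-1}))$ and to compute their images under $f$. Note first that $C'_n(\R^{k-1})$ is the complement, inside the contractible space $(\R^{k-1})^{n}$, of the real algebraic set $Z=\bigcup_{s}Z_s$, where $s$ ranges over all $(k-1)$-subsets of $\{1,\dots,n\}$ and $Z_s$ is the locus of configurations for which the $k-1$ points indexed by $s$ are affinely dependent. Each $Z_s$ is an irreducible determinantal variety of codimension $2$: at a generic point of $Z_s$ the points indexed by $s$ span a $(k-3)$-plane, and forcing their affine span down from dimension $k-2$ to $k-3$ is a codimension-$2$ condition. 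Since $(\R^{k-1})^{n}$ is simply connected and the smooth (codimension-$2$) locus of each $Z_s$ is connected, a standard transversality argument — bound a loop by a disk, make the disk transverse to the stratified set $Z$, and excise the finitely many transverse intersection points, all of which lie on the smooth codimension-$2$ part — shows that $\pi_1(C'_n(\R^{k-1}))$ is generated by conjugates of the meridians $\mu_s$ of the components $Z_s$.

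Second, I would compute $f(\mu_s)$. Fix $s=\{j_1,\dots,j_{k-1}\}$ and pick a generic point $z_s\in Z_s$ represented by a configuration in which $j_1,\dots,j_{k-2}$ span a $(k-3)$-plane $Q$, the point $j_{k-1}$ lies on $Q$, and all remaining points are in general position with respect to everything. A $2$-disk transverse to $Z_s$ at $z_s$ is obtained by moving $j_{k-1}$ in its two normal directions to $Q$ inside $\R^{k-1}$, so $\mu_s$ is the loop in which $j_{k-1}$ travels once around a small circle linking $Q$. During this motion a $k$-subset $m$ becomes singular (its points land on a common $(k-2)$-plane) exactly when $m\ni j_{k-1}$ and the $(k-2)$-plane spanned by $m\setminus\{j_{k-1}\}$ contains $Q$; by genericity this forces $m\setminus\{j_{k-1}\}\supseteq\{j_1,\dots,j_{k-2}\}$, i.e. $m=s\cup\{l\}$ for some $l\notin s$. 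Each such plane $\langle j_1,\dots,j_{k-2},l\rangle$ meets the small linking disk in a line through its centre, hence is crossed transversally twice during the loop, once in each direction, and the $n-k+1$ such lines are met in the same cyclic order on the two half-turns. Hence, after an arbitrarily small perturbation making $\mu_s$ good and stable, $f(\mu_s)=(a_{s\cup l_1}\cdots a_{s\cup l_{n-k+1}})^{2}$, which is precisely an $\epsilon$-type element.

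Finally, since $f\colon\pi_1(C'_n(\R^{k-1}))\to G_n^k$ is a homomorphism by Theorem \ref{th1}, any $g$ in the group, written as a product $\prod_i w_i\mu_{s_i}^{\pm1}w_i^{-1}$ of conjugated meridians, is sent to $\prod_i f(w_i)\,f(\mu_{s_i})^{\pm1}\,f(w_i)^{-1}$. The inverse of $(a_1\cdots a_r)^{2}$ equals $(a_r\cdots a_1)^{2}$ because the $a_m$ are involutions, so each factor $f(\mu_{s_i})^{\pm1}$ is again an $\epsilon$-type element; therefore $f(g)$ is a product of conjugates of $\epsilon$-type elements, which is the assertion.

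I expect the main obstacle to be the genericity bookkeeping in the second step: one has to check that, for a small enough linking circle, no $k$-subset other than the sets $s\cup\{l\}$ ever becomes singular (a static subset stays in general position, and a subset $\{j_{k-1}\}\cup m'$ whose $(k-2)$-plane $\langle m'\rangle$ does not contain $Q$ has that plane avoiding a small neighbourhood of $Q$ in $\R^{k-1}$), and that the two half-turns traverse the hyperplanes $\langle j_1,\dots,j_{k-2},l\rangle$ in matching cyclic order — this last point is exactly the kind of compactness/general-position reasoning already used in the hierarchy discussion above.
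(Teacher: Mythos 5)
Your proposal is correct and follows essentially the same route as the paper's sketch: the paper likewise generates $\pi_1(C'_n(\R^{k-1}))$ by conjugates of meridians of the codimension-two locus where some $k-1$ points become affinely dependent (invoking simple connectedness of $C_n(\R^{k-1})$, where you use contractibility of $(\R^{k-1})^{n}$), and computes, via the small rotation of one point about the $(k-3)$-plane spanned by the other $k-2$, that each meridian maps to an $\epsilon$-type element. Your additional genericity bookkeeping and the handling of inverses via involutivity of the generators are details the paper leaves implicit, not a different method.
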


We can sketch the proof as follows (we illustrate it for $k=4$; for larger $k$ it is analogous). Let some point $x_{3}$ lie in a small $\delta$-neighbourhood of a line passing through the points $x_{1},x_{2}$ in three-space. Let us rotate $x_{3}$ around this line. Then the plane passing through $x_{1},x_{2},x_{3}$ will rotate together with $x_{3}$; after the $2\pi$-rotation, for each of the points $x_{j},j\neq 1,2,3,$ there will be two moments when $x_{j}$ belongs to the plane passing through $x_{1},x_{2},x_{3}$; these two moments  differ by a  $\pi$-rotation. If we choose $\delta$ small enough, there will be no other singular moments; this means that we get exactly the square of some word of length $(n-3)$, i.e., an $\epsilon$-type element.
It is clear that instead of $1,2,3$ we can consider any three indices (for $k=4$).

The fact that the image of $Im f$ consists only of products of conjugates to $\epsilon$-type elements, can be seen as follows. The space $C_{n}(\R^{k-1})$ is simply connected. Hence, in order to understand the fundamental group of the space $C'_{n}(\R^{k-1})$ one should see what happens in the neighbourhood of the complement $C_{n}(\R^{k-1})\backslash C'_{n}(\R^{k-1})$, i.e., in the neighbourhood of sets of those points where some $k-1$ points belong to the same  $(k-3)$-plane. The elements of $G_{n}^{k}$ corresponding to cycles around such deleted subsets are exactly $\epsilon$-type elements. Thus, with a reference point $A$ fixed, we can decompose each path in $C'_{n}(\R^{k-1})$ into a collection of paths of the following type. Each path approaches complement $C_{n}(\R^{k-1})\backslash C'_{n}(\R^{k-1})$, rotates around this complement, and returns back to $A$. This precisely means that we have a product of conjugates of $\epsilon$-type elements

Certainly, this argument fails for $\pi_{1}(C'_{n}({\R}P^{k-1}))$ since real projective spaces are not simply connected.

According to the general principle of \cite{Great}, the particle should not necessarily be a point.
For example one can treat Theorems \ref{th1},\ref{th2} by using projective duality.

Namely, let us consider the space of sets of $n$ (projective) hyperplanes in $k$-dimensional real (projective) space, where every $k-1$ hyperplanes are in general position. The problem of studying of this space is exactly the problem of studying  $C'_{n}(\R^{k})$ (resp., $C'_{n}({\R}P^{k})$).

This naturally extends the application domain of groups  $G_{n}^{k}$ to the study of various subspaces of various codimensions.

It would be extremely interesting to establish the connection between $G_{n}^{k}$ with the Manin-Schechtmann ``higher braid groups'' \cite{ManinSchechtmann}, where the authors study the fundamental group of complements to some configurations of {\em complex hyperplanes}. One of the relations in the paper of M.Kapranov and V.Voevodsky \cite{KapranovVoevodsky} looks quite similar to our relation (3) for $G_{n}^{k}$.

Besides the questions mentioned above, the following question seems quite interesting to the author: how to realize $G_{n}^{k}$ for $k\ge 5$, by a motion of points on $\R^{2}$; this will lead to a homomorphism of the pure braid group to $G_{n}^{k},k\ge 5$. As mentioned above, there is a homomorphism from $PB_{n}$ to $G_{n}^{4}$.

It seems likely that the connections between the Artin braid group and all such geometric realizations of groups $G_{n}^{k}$ can be achieved in the following manner. Take an open disc in $\R^{2}$ and embed it smoothly into $\R^{k-1}$ in such a way that
no $(k-1)$ points on the embedded surface belong to the same $(k-3)$-plane. Then we obviously get a map from the pure braid group to $G_{n}^{k}$. For $k=4$ we can map the disc $\{x,y|x^{2}+y^{2}<1\}$ to the half-sphere: $(x,y)\mapsto(x,y,\sqrt{1-x^{2}-y^{2}})$ which would lead
to a nice picture on the half-sphere: no three points on the half-sphere are collinear, and if four points on the sphere belong to
the same plane, then they belong to the same circle, cf. with \cite{MN}.

So, the following natural question arises: which topological spaces $T$ can be (smoothly) embedded into $\R^{k-1}$ in such a way that no $k-1$ points in the image of $T$ belong to the same $(k-1)$ plane?

I am grateful to D.P.Ilyutko and D.A.Fedoseev for their useful comments.
}

 \end{document}